\newtheorem{theorem}[subsection]{Theorem}
\newtheorem{proposition}[subsection]{Proposition}
\newtheorem{corollary}[subsection]{Corollary}
\theoremstyle{definition}
\newtheorem{remark}[subsection]{Remark}
\newtheorem{definition}[subsection]{Definition}
\newcommand\cC{\mathcal{C}}
\newcommand\cE{\mathcal{E}}
\newcommand\cF{\mathcal{F}}
\newcommand\cO{\mathcal{O}}
\newcommand\cW{\mathcal{W}}
\newcommand\CC{\mathbb{C}}
\newcommand\ZZ{\mathbb{Z}}
\newcommand\bG{\mathbf{G}}
\newcommand\rD{\mathrm{D}}
\newcommand\rG{\mathrm{G}}
\newcommand\rH{\mathrm{H}}
\newcommand\rR{\mathrm{R}}
\newcommand\rS{\mathrm{S}}
\DeclareMathOperator{\CH}{CH}
\DeclareMathOperator{\id}{id}
\DeclareMathOperator{\NS}{NS}
\DeclareMathOperator{\Hom}{Hom}
\DeclareMathOperator{\Ext}{Ext}
\DeclareMathOperator{\Gr}{Gr}
\begin{document}

\title{Beauville-Voisin filtrations on zero cycles of moduli space of stable sheaves on K3 surfaces}
\date{\today}

\author{Zhiyuan Li}
\address{Zhiyuan Li, Shanghai Center for Mathematical Sciences, Fudan University, Jiangwan Campus, Shanghai, 200438, China}
\email{zhiyuan\_li@fudan.edu.cn}
\author{Ruxuan Zhang}
\address{Ruxuan Zhang, Shanghai Center for Mathematical Sciences, Fudan University, Jiangwan Campus, Shanghai, 200438, China}
\email{rxzhang18@fudan.edu.cn}

\begin{abstract}
The Beauville-Voisin conjecture predicts the existence of a filtration on projective hyper-K\"ahler manifolds opposite to the conjecture Bloch-Beilinson filtration, called the Beauivlle-Voisin filtration. In \cite{Voi16}, Voisin has introduced a filtration on zero cycles of an arbitrary projective hyper-K\"ahler manifold. On moduli space of stable objects of a projective K3 surface, there are other candidates constructed by Shen-Yin-Zhao, Barros-Flapan-Marian-Silversmith in \cite{SYZ20, BFMS19} and  more recently by  Vial in \cite{Vi20} from different point of views. According to the work in \cite{Vi20},  all of them are proved to be equivalent except Voisin's filtration. In this paper, we show that Voisin's filtration is the same as the other  filtrations.   As an application, we prove a  conjecture in \cite{BFMS19}. 
\end{abstract}

\subjclass[2020]{14J28; 14F08; 14J42}
\keywords{Beauville-Voisin filtration, moduli space of sheaves, hyper-K\"ahler}

\maketitle

\section{Introduction}
On a projective K3 surface $X$,  Beauville and Voisin \cite{BV04} showed that $X$ carries a canonical zero-cycle class of degree 1,
$$[o_X] \in  \CH_0(X),$$
where $o_X$ can be taken any point lying on a rational curve in $X$.  The intersections of divisor classes in $X$, as well as the second Chern class of $X$, lie in $\ZZ\cdot  [o_X]$.   In \cite{OG13}, O’Grady introduced a  filtration $\rS_\bullet(X)$ on $\CH_0(X)$,
$$ \rS_i(X) =\bigcup_{\deg([z])=i}\{ [z] + \ZZ \cdot [o_X] \}$$ for all effective zero-cycles $z$ of degree $i$, which gives another characterization of the Beauville-Voisin class $[o_X]$. 
More generally, Beauville-Voisin's conjecture predicts that a similar picture also holds for any projective hyper-K\"ahler manifold $M$ and there exists a  filtration (called Beauville-Voisin filtration)  on the Chow ring viewed as an opposite to the conjectural Bloch-Beilinson filtration. For zero cycles,  Voisin has constructed a filtration $\rS_\bullet\CH_0(M)\subseteq \CH_0(M)$   in \cite{Voi16}  defined by  $$\rS_{i}\CH_0(M):=\left<x\in M|~\dim O_x\geq d-i\right>$$ where $O_x$ is the orbit of $x$ under the rational equivalence.   

In the case $M=M_\sigma(v)$ is the moduli space of $\sigma$-stable objects in the derived category $\rD^b(X)$ of a projective K3 surface $X$ for some Bridgeland stability condition $\sigma$.  Shen-Yin-Zhao  discovered a  filtration on the derived category $\rD^b(X)$, which gives another natural filtration $\rS_{\bullet}^{\rm SYZ}\CH_0(M)$, $$\rS_i^{\rm SYZ}\CH_0(M):= \left<E|~c_2(E)\in \rS_i(X)\right>. $$ 
They  showed that  there is an inclusion 
$$\rS_{i}^{\rm SYZ} \CH_0(M)\subseteq \rS_{i}\CH_0(M),$$ 
between the two filtrations.  A natural question is  whether the two filtration coincides (cf.~\cite[Remark 2.6]{SYZ20}).    
This has been confirmed on Hilbert scheme of points on $X$ via using the density of constant cycle subvarieties. In this paper, we give an affirmative answer to this question and the main result is

\begin{theorem}\label{mainthm}
Let $M=M_\sigma(v)$ be the moduli space of $\sigma$-stable objects in $\rD^b(X)$ with a primitive Mukai vector $v$. Suppose $\sigma$ is $v$-generic. Then  we have    
$$\rS_{\bullet}^{\rm SYZ} \CH_0(M)= \rS_{\bullet}\CH_0(M).$$
\end{theorem}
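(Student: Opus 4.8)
The plan is to establish the reverse inclusion $\rS_i\CH_0(M)\subseteq \rS_i^{\rm SYZ}\CH_0(M)$, the opposite one already being due to Shen--Yin--Zhao. Since $\rS_i\CH_0(M)$ is generated by the classes $[x]$ of points $x=[E]$ whose rational-equivalence orbit satisfies $\dim O_x\geq d-i$, it suffices to prove that every such generator lies in $\rS_i^{\rm SYZ}\CH_0(M)$, i.e. that $c_2(E)\in\rS_i(X)$. The basic tool is the (quasi-)universal object $\cE$ on $M\times X$: its second Chern class gives a correspondence $c_2(\cE)\in\CH^2(M\times X)$, hence a homomorphism $\phi\colon\CH_0(M)\to\CH_0(X)$ with $\phi([x])=c_2(E_x)$ for $x=[E_x]$. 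Because $\phi$ factors through rational equivalence it is constant on each orbit, so if $Z\ni x$ is a subvariety of dimension $\geq d-i$ all of whose points are rationally equivalent, then $c_2(E_y)=c_2(E_x)=:z_0$ in $\CH_0(X)$ for every $y\in Z$.

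In this language the statement to be proved becomes the bound: a closed irreducible subvariety $Z\subseteq M$ along which all points are rationally equivalent (so $c_2(E_y)\equiv z_0$) satisfies
$$\dim Z\leq d-\ell(z_0),$$
where $\ell(z_0)$ is the O'Grady level of $z_0$, the least $i$ with $z_0\in\rS_i(X)$. Granting this, $\dim O_x\geq d-i$ forces $\ell(z_0)\leq i$, that is $c_2(E)=z_0\in\rS_i(X)$, which is exactly the desired inclusion. I would first verify the bound on a Hilbert scheme $M=\mathrm{Hilb}^n(X)$, where it is already known: here $\phi$ is, up to sign, the cycle-class map $\gamma_n\colon\mathrm{Hilb}^n(X)\to\CH_0(X)$, the orbit of $x$ is contained in the fibre $\gamma_n^{-1}(z_0)$, and the analysis of the O'Grady filtration via density of constant-cycle subvarieties gives $\dim\gamma_n^{-1}(z_0)=n-\ell(z_0)$, hence the inequality with $d=n$.

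To pass from the Hilbert scheme to an arbitrary $M_\sigma(v)$ I would invoke the wall-crossing and derived-equivalence theory of Bayer--Macr\`i: for primitive $v$ and $v$-generic $\sigma$, $M_\sigma(v)$ is connected to a moduli space of the form $\mathrm{Hilb}^n(Y)$, for a Fourier--Mukai partner $Y$ of $X$ (possibly $Y=X$), through a finite chain of isomorphisms induced by derived (auto)equivalences together with birational maps induced by crossing walls. It then remains to check that both filtrations are preserved along such a chain. For the right-hand side this is essentially the content of the cited work: the SYZ filtration coincides with those of Barros--Flapan--Marian--Silversmith and of Vial, which are of motivic nature and transform correctly under Fourier--Mukai transforms (compatibly with Huybrechts' invariance of the O'Grady filtration on $\CH_0$ under derived equivalences) and under birational maps of hyper-K\"ahler manifolds. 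For the left-hand side one uses that a birational map of projective hyper-K\"ahler manifolds is an isomorphism in codimension one and induces an isomorphism on $\CH_0$ respecting rational equivalence, so that orbit dimensions, and hence $\rS_\bullet$, are preserved, while isomorphisms induced by derived equivalences preserve $\rS_\bullet$ trivially.

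The main obstacle is precisely this transport of Voisin's filtration, which is defined purely through orbit dimensions and is \emph{a priori} only a biregular invariant: one must show that $\dim O_x$ is unchanged under the birational maps occurring in the Bayer--Macr\`i chain, i.e. that the indeterminacy loci (of codimension $\geq 2$) neither absorb nor enlarge rational-equivalence orbits, and that an honest Hilbert scheme $\mathrm{Hilb}^n(Y)$ can always be reached for primitive $v$ (a lattice-theoretic point about the transitivity of the action of derived autoequivalences on primitive Mukai vectors of fixed square). A secondary technical point is the passage from a genuine universal family to a quasi-universal one in the definition of $\phi$, which only alters the construction by a nonzero multiplicity and does not affect the conclusion.
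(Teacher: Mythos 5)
There is a genuine gap, and it sits exactly at the step you yourself flag as the ``main obstacle'': the reduction to an honest Hilbert scheme is impossible in general. Every move in your proposed chain --- an isomorphism induced by a derived equivalence $\rD^b(X)\simeq \rD^b(Y)$, or a wall-crossing birational map (which, by Bayer--Macr\`i, connects $M_\sigma(v)$ to $M_{\sigma'}(v)$ on the \emph{same} $X$ with the \emph{same} $v$) --- changes the pair $\bigl(\widetilde{H}(X),v\bigr)$ only by a Hodge isometry of Mukai lattices. A Hodge isometry preserves the transcendental lattice, hence restricts to an isometry of the algebraic Mukai lattices, and therefore preserves the divisibility of $v$ inside the algebraic Mukai lattice. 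For the Hilbert-scheme vector $(1,0,1-n)$ this divisibility equals $1$, but it can equal $2$ for primitive $v$: on a K3 surface with $\Pic(X)=\ZZ H$, $H^2=2$, the vector $v=(2,3H,2)$ is primitive with $v^2=10$, yet $\langle v,(r',aH,s')\rangle=6a-2r'-2s'$ is always even. So $M_H(2,3H,2)$ can never be connected to $\mathrm{Hilb}^6(Y)$ for any Fourier--Mukai partner $Y$ by a chain of the kind you describe; the reduction fails before it starts. Moreover, even in cases where such a chain exists, the transport of Voisin's filtration $\rS_\bullet$ across the wall-crossing birational maps is precisely what you cannot assume: orbits under rational equivalence are countable unions of subvarieties whose maximal-dimensional components could a priori lie in the indeterminacy or exceptional loci. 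In the paper this birational invariance of $\rS_\bullet$ is a \emph{corollary} of the main theorem, not an ingredient of its proof.

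What your proposal is missing is the observation that one does not need $M$ to be birational to a Hilbert scheme; one needs a correspondence from $M$ into a (much larger) Hilbert scheme. The paper first tensors by $mH$ and applies Yoshioka's Fourier--Mukai transform to get a biregular isomorphism $M_H(v)\cong M_{\widehat H}(\widehat v)$ onto a moduli space all of whose members are $\mu$-stable \emph{locally free} sheaves of large rank; being an isomorphism, it transports $\rS_\bullet$ trivially and $\rS^{\rm SYZ}_\bullet$ by \cite[Proposition 0.4]{SYZ20} (the same mechanism handles Bridgeland moduli spaces, which are isomorphic, not merely birational, to Gieseker moduli spaces). Then, for a locally free $E$ satisfying the vanishing condition $(\star)$, Voisin's degeneracy-locus construction $0\to\cO_X^{r-1}\to E\to I_Z\otimes D\to 0$ yields a generically injective map $\phi_E\colon \Gr(r-1,\rH^0(X,E))\dashrightarrow X^{[d+g]}$; carried out in a family over a constant cycle subvariety $Y\ni[E]$ of dimension $\geq d-i$, it produces a constant cycle subvariety of $X^{[d+g]}$ of dimension $\dim Y+g\geq (d+g)-i$ through $[Z]$ (constancy of the image uses the criterion of \cite{MZ20} that rational equivalence of points of $M$ is detected by $c_2$ in $\CH_0(X)$). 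Voisin's theorem on Hilbert schemes then gives $c_2(E)=c_2(I_Z\otimes D)\in\rS_i(X)$, i.e.\ the inclusion $\rS_i\CH_0(M)\subseteq\rS_i^{\rm SYZ}\CH_0(M)$ you were after. Your outline contains no substitute for this geometric step, and without it the argument does not go through.
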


As a corollary, we obtain the density of constant cycle subvarieties on $M$ (See Corollary \ref{cor:dense}). Finally, let us compare Voisin's filtration with other filtrations.  In \cite{BFMS19}, Flapan, Marian and Silversmith have constructed two filtrations $\rS_\bullet^{\rm small}\CH_0(M)$ and $\rS^{\rm BFMS}_\bullet \CH_0(M)$ from the product point of view, while Vial has found a so called {\it co-radical} filtration  $\rR_\bullet \CH_0(M)$ from the co-algebra structure on birational motives. Combined with the results in \cite{BFMS19, Vi20}, our main theorem indicates that 
\begin{theorem}\label{mainthm2} With the assumptions in Theorem \ref{mainthm}, we have 
  \begin{equation}\label{BFMS=SYZ}
    \rS^{\mathrm{small}}_{\bullet}\CH_0(M)=\rS^{\rm BFMS}_\bullet \CH_0(M)=\rS_{\bullet}\CH_0(M)=\rS_\bullet^{\rm SYZ} \CH_0(M)=\rR_\bullet \CH_0(M).
\end{equation}
As a consequence, \cite[Conjecture 4]{BFMS19} holds. 
\end{theorem}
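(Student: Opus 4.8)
The plan is to deduce Theorem \ref{mainthm2} by assembling Theorem \ref{mainthm} with the chain of equalities already available in the literature, rather than by re-deriving each comparison from scratch. The key observation is that the five filtrations appearing in \eqref{BFMS=SYZ} split naturally into two groups: the four filtrations $\rS^{\mathrm{small}}_\bullet$, $\rS^{\mathrm{BFMS}}_\bullet$, $\rS^{\mathrm{SYZ}}_\bullet$ and $\rR_\bullet$, whose mutual equality is exactly the content of Vial's comparison results in \cite{Vi20} together with \cite{BFMS19}, and Voisin's filtration $\rS_\bullet$, which is precisely the outlier those results do not reach (as emphasized in the abstract). Theorem \ref{mainthm} supplies the missing link $\rS^{\mathrm{SYZ}}_\bullet \CH_0(M) = \rS_\bullet \CH_0(M)$, so the whole statement reduces to transitivity of equality once the hypotheses are aligned.

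Concretely, I would first record that under the standing assumptions — $v$ primitive and $\sigma$ a $v$-generic stability condition, so that $M = M_\sigma(v)$ is a nonempty smooth projective hyper-K\"ahler manifold deformation equivalent to a Hilbert scheme of points on $X$, of the expected dimension $\langle v,v\rangle + 2$ — the moduli space lies within the common scope of \cite{BFMS19, Vi20}. This guarantees the three equalities
\[
\rS^{\mathrm{small}}_{\bullet}\CH_0(M)=\rS^{\mathrm{BFMS}}_\bullet \CH_0(M)=\rS_\bullet^{\mathrm{SYZ}} \CH_0(M)=\rR_\bullet \CH_0(M).
\]
Appending the equality $\rS_\bullet^{\mathrm{SYZ}} \CH_0(M)=\rS_\bullet \CH_0(M)$ furnished by Theorem \ref{mainthm} and invoking transitivity then yields the full string \eqref{BFMS=SYZ}.

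Finally, for the stated consequence, I would unwind \cite[Conjecture 4]{BFMS19}, which predicts that the product-geometric filtration constructed there coincides with Voisin's orbit filtration; this is now an instance of the established identity $\rS^{\mathrm{small}}_\bullet \CH_0(M) = \rS_\bullet \CH_0(M)$. Since the heavy lifting is carried out in Theorem \ref{mainthm}, the only genuine obstacle at this stage is hypothesis bookkeeping: one must verify that the primitivity and $v$-genericity conditions indeed place $M$ inside the precise range of validity of each comparison in \cite{Vi20, BFMS19} — in particular that all five filtrations are defined on the same Chow group of the same smooth projective model and not on a larger or smaller class of inputs — so that the transitivity step is legitimate. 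Granting this, both Theorem \ref{mainthm2} and \cite[Conjecture 4]{BFMS19} follow formally.
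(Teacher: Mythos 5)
Your proposal is correct and follows essentially the same route as the paper: cite \cite[Proposition A.6, Theorem 8.4]{Vi20} together with \cite{BFMS19} for the equality $\rS^{\mathrm{small}}_{\bullet}\CH_0(M)=\rS^{\rm BFMS}_\bullet \CH_0(M)=\rS_\bullet^{\rm SYZ} \CH_0(M)=\rR_\bullet \CH_0(M)$, append Theorem \ref{mainthm} to bring in Voisin's filtration, and deduce \cite[Conjecture 4]{BFMS19} from the resulting chain. The one slight imprecision is your paraphrase of Conjecture 4: it is a vanishing statement $[Y]\cdot \overline{\Delta}_{0,1}\cdots\overline{\Delta}_{0,d+1}=0$ for constant cycle subvarieties $Y$, which by \cite[Lemma 4]{BFMS19} is equivalent to the inclusion $\rS_\bullet\CH_0(M)\subseteq \rS^{\rm BFMS}_\bullet\CH_0(M)$ rather than to an equality of filtrations, but since the established equality implies this inclusion, your deduction goes through exactly as in the paper.
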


\subsection*{Acknowledgement} The authors are grateful to Junliang Shen, Qizheng Yin and Ziyu Zhang for useful comments. The authors were supported by NSFC for Innovative Research Groups (Grant No.~12121001) and partially supported by NSFC grant for General Program (11771086).

\section{Degenerate loci of vector bundles}
In this section, we recap  Voisin's work in \cite{Voi15} which establish a  relation between the zero cycles on the moduli space of stable sheaves and the  Hilbert scheme of points. 
Let $(X,H)$ be a polarized K3 surface and let $M=M_H(v)$ be the moduli space of Gieseker $H$-stable sheaves on $X$ with a primitive  Mukai vector $v=(r,D, s)$. We may assume $H$ is $v$-generic so that $M$ is a smooth projective hyper-K\"ahler variety of dimension 
$$v^2+2=D^2-2rs+2.$$ 
Given a locally free sheaf $E \in M$, let  $d=\frac{1}{2}\Ext^1(E,E)$ and $g=\dim \Gr(r-1,\rH^0(X,E))$. As in \cite{Voi15}, if $E$ is globally generated and $\rH^1(X,E)=0$, we have an exact sequence for a general $(r-1)$-dimensional sections of $\rH^0(X,E)$ 
\begin{equation}\label{degeneracy}
0 \rightarrow \mathcal{O}_X^{r-1} \xrightarrow{s} E \rightarrow I_Z \otimes D \rightarrow 0
\end{equation}
where $Z$ is the degeneracy locus of $s$ consisting of $d+g$ distinct points. 
Then the sequence defines a rational map
$$\phi_E:\rG_E\dashrightarrow X^{[d+g]},$$
where $\rG_E=\Gr(r-1,\rH^0(X,E))$, sending $s$ to $I_Z\otimes D\subset X^{[d+g]}$. 

\begin{proposition}\label{unique}
Assume the  condition $(\star)$
\begin{equation*}\label{*}
   E~\hbox{is globally generated and}~ \rH^i(X,E)=0~ \hbox{for} ~i=1,2.
\end{equation*}
holds. 
The morphism $\phi_E$ is generically injective.
Moreover, for any two stable sheaves $E, F\in M$ satisfying $(\star)$ as extensions of $\cO_X^{r-1}$ and $I_Z\otimes D$ for some $Z$, we have $E\cong F$.
\end{proposition}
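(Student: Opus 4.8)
The plan is to read everything off the long exact cohomology sequence of the defining extension $0\to\cO_X^{r-1}\to E\to I_Z\otimes D\to 0$. Using the K3 vanishings $H^1(\cO_X)=0$ (so that $H^1(\cO_X^{r-1})=0$ and $H^3(\cO_X^{r-1})=0$) together with $(\star)$, namely $H^1(E)=H^2(E)=0$, this sequence collapses to an isomorphism
$$\delta\colon H^1(I_Z\otimes D)\xrightarrow{\ \sim\ } H^2(\cO_X^{r-1})=\CC^{r-1},$$
and also yields $H^2(I_Z\otimes D)=0$; in particular $h^1(I_Z\otimes D)=r-1$. The connecting map $\delta$ is cup product with the extension class, so under the Serre duality identification $\Ext^1(I_Z\otimes D,\cO_X)\cong H^1(I_Z\otimes D)^\vee$ on the K3 surface it is exactly the pairing against the components $v_1,\dots,v_{r-1}$ of that class. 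Hence $\delta$ is an isomorphism if and only if $v_1,\dots,v_{r-1}$ form a basis of the $(r-1)$-dimensional space $V:=\Ext^1(I_Z\otimes D,\cO_X)$. This basis description is the common input for both assertions.

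The uniqueness statement then follows immediately, so I would dispatch it first. Both $E$ and $F$ are extensions of $I_Z\otimes D$ by $\cO_X^{r-1}$ with the same $Z$, and by the previous paragraph their classes $\eta_E,\eta_F\in V\otimes\CC^{r-1}$ have components forming bases of the single space $V$. Two bases of $V$ differ by an element $g\in GL_{r-1}=\mathrm{Aut}(\cO_X^{r-1})$, so $g_*\eta_E=\eta_F$; the resulting morphism of extensions is an isomorphism on middle terms, giving $E\cong F$.

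For the generic injectivity of $\phi_E$ I would reconstruct $W$ from $Z$. Since $H^1(\cO_X^{r-1})=0$, the subspace $W=H^0(\cO_X^{r-1})$ is precisely the kernel of the map $H^0(E)\to H^0(I_Z\otimes D)$ induced by the quotient $q\colon E\to I_Z\otimes D$; thus $Z$ determines $W$ as soon as $q$ is unique up to scalar, i.e.\ $\hom(E,I_Z\otimes D)=1$. Applying $\Hom(-,I_Z\otimes D)$ to the extension and using $\Hom(I_Z\otimes D,I_Z\otimes D)=\CC$ produces an exact sequence
$$0\to\CC\to\Hom(E,I_Z\otimes D)\to H^0(I_Z\otimes D)^{\oplus(r-1)}\xrightarrow{\ \partial\ }\Ext^1(I_Z\otimes D,I_Z\otimes D),$$
and, because the $v_i$ form a basis of $V$, the map $\partial$ is identified with the Yoneda multiplication $\mu\colon H^0(I_Z\otimes D)\otimes V\to\Ext^1(I_Z\otimes D,I_Z\otimes D)$, so that $\ker\partial\cong\ker\mu$. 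Thus injectivity of $\phi_E$ at a point $[W]$ amounts to injectivity of $\mu$, and by upper semicontinuity of $\hom$ it suffices to check $\hom(E,I_Z\otimes D)=1$ at one $W$ in order to conclude it for a general one.

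The main obstacle is precisely this last step: verifying injectivity of the Yoneda map $\mu$, equivalently $\hom(E,I_Z\otimes D)=1$, for at least one (hence a general) $(r-1)$-plane $W$. I expect to settle it either by a direct dimension count for a sufficiently general $Z$ in the image, or by exhibiting the reconstruction $Z\mapsto\ker\bigl(H^0(E)\to H^0(I_Z\otimes D)\bigr)$ as a rational inverse to $\phi_E$ on a dense open subset; both reduce to the injectivity of $\mu$. Everything else is formal once the numerical identity $h^1(I_Z\otimes D)=r-1$ and the resulting basis description of the extension class are in place.
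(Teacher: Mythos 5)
Your reduction is set up correctly, but the step you yourself flag as ``the main obstacle'' is a genuine gap, and it is precisely the mathematical content of the proposition: you never prove $\hom(E,I_Z\otimes D)=1$, equivalently the injectivity of your Yoneda map $\mu$. Neither of your proposed exits works as stated: a dimension count cannot detect injectivity of $\mu$ (and the relevant target $\Ext^1(I_Z\otimes D,I_Z\otimes D)$ is large, of dimension $2(d+g)$), while the ``rational inverse'' $Z\mapsto\ker\bigl(\rH^0(E)\to \rH^0(I_Z\otimes D)\bigr)$ is circular, since it is well defined only once you know the quotient $E\to I_Z\otimes D$ is unique up to scalar --- which is exactly the statement to be proved. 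The source of the trouble is that you applied $\Hom(-,I_Z\otimes D)$ to the extension, which drags in Ext groups of $I_Z\otimes D$ with itself, over which the hypothesis $(\star)$ gives no control. The paper applies the functor in the other direction: applying $\Hom(E,-)$ to \eqref{degeneracy} gives
$$0\to \Hom(E,\cO_X^{r-1})\to \Hom(E,E)\to \Hom(E,I_Z\otimes D)\to \Ext^1(E,\cO_X^{r-1})\to\cdots$$
and Serre duality on the K3 surface converts $(\star)$ into exactly the needed vanishing: $\Hom(E,\cO_X)\cong \rH^2(X,E)^\vee=0$ and $\Ext^1(E,\cO_X)\cong \rH^1(X,E)^\vee=0$, whence $\Hom(E,I_Z\otimes D)\cong\Hom(E,E)=\CC$ by stability of $E$. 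This two-line computation closes your gap; injectivity of $\phi_E$ at any $W$ admitting the sequence \eqref{degeneracy} then follows by your own reconstruction argument (the same one the paper uses).

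By contrast, your uniqueness argument is complete and takes a genuinely different route from the paper's. The paper again applies $\Hom(E,-)$, now to the extension defining $F$, to get $\Hom(E,F)\cong\Hom(E,I_Z\otimes D)\cong\CC$, and then uses stability of both sheaves (a nonzero map between stable sheaves with the same Hilbert polynomial is an isomorphism). Your argument --- that $(\star)$ forces the connecting map $\delta$ to be an isomorphism, hence the components of each extension class form a basis of the $(r-1)$-dimensional space $V=\Ext^1(I_Z\otimes D,\cO_X)$, and two bases differ by an element of $\mathrm{GL}_{r-1}=\mathrm{Aut}(\cO_X^{r-1})$, giving an isomorphism of extensions --- is correct, and it has the mild advantage of not invoking stability of $F$ at all. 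Still, since the generic injectivity half rests on the unproven $\hom(E,I_Z\otimes D)=1$, the proposal as a whole is incomplete, and the missing piece is exactly the paper's central computation.
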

\begin{proof}
The generic injectivity of $\phi_E$ is already proved in \cite[Proposition 3.2]{Voi15}. It follows from the computation of $\Hom(E,I_Z\otimes D)$. Indeed, apply  the functor $\Hom(E,-)$ to the exact sequence  \eqref{degeneracy}, one can obtain a long exact sequence:
$$0\rightarrow \Hom(E,\cO_X^{r-1})\rightarrow \Hom(E,E)\rightarrow \Hom(E,I_Z \otimes D)\rightarrow \Ext^1(E,\cO_X^{r-1})\to \ldots$$
By the Serre duality, we have $$\Hom(E,\cO_X^{r-1})=\Ext^1(E,\cO_X^{r-1})=0. $$ Since $E$ is simple, $\Hom(E,E)\cong \CC$.  It follows that $$\Hom(E,I_Z \otimes D)\cong \CC.$$ It implies that $\phi_E$ is injective on $s$ whenever we have the sequence \eqref{degeneracy} for $s$. If we have  another extension
$$ 0 \rightarrow \mathcal{O}_X^{r-1} \rightarrow F \rightarrow I_Z \otimes D \rightarrow 0,$$
after applying $\Hom(E,-)$, one can find that
$$\Hom(E,F)\cong \Hom(E,I_Z \otimes D)\cong \CC.$$
In particular, if $F$ is a stable sheaf, then $E\cong F$.
\end{proof}

More generally, one can extend this construction to a family of stable locally free sheaves. Let $\cE$ be a locally free sheaf on $B\times X$ for some variety $B$ satisfying that for $E_b=\cE|_{\{b\}\times X}$,  we have $$\rH^i(X,E_b)=0,~i=1,2$$ for any $b\in B$. In particular, the spaces of sections $\rH^0(X,E_b)$ are of the same dimension and $\cF:=\pi_*\cE$ is a locally free sheaf on $B$, where $\pi:X\times B\rightarrow B$ is the projection.
Let $\bG_B:=\Gr^{r-1}(\cF)$ be the Grassmannian of rank $r-1$  subbundles of $\cF$ which fits into the following diagram:
\begin{equation}\label{diag1}
    \begin{tikzcd}
\bG_B\times X \arrow[r, "p\times \id"] \arrow[d, "q"] & B\times X \arrow[d, "\pi"] \\
\bG_B \arrow[r, "p"]                                 & B                         
\end{tikzcd}
\end{equation}

There exists a universal locally free subsheaf $\cW\hookrightarrow p^*\cF$ on $\bG_B$. Pulling back  to $\bG_B\times X$, one has a  natural morphism:
\begin{equation}\label{relative}
   \sigma: q^*\cW\rightarrow (p\times \id)^*\cE. 
\end{equation}
Then the degeneracy locus of $\sigma$ in $\bG_B\times X$ is:
$$D_k(\sigma)=\{(s,x)~|~\mathrm{rank}(\sigma(s,x))\leq k\}.$$
A general fiber of $D_k(\sigma)\rightarrow \bG_B$ consists of $d+g$ distinct points in $X$ by Proposition \ref{unique}.
Then it defines a rational map $\phi$:
\begin{equation*}
\begin{tikzcd}
\phi_B: \bG_B \arrow[r,dashed] &  X^{[d+g]} 
\end{tikzcd}
\end{equation*}
The restriction of $\sigma$ at $\{(b,s)\}\times X\subset \bG_B\times X$ is exactly  $\cO_X^{r-1}\xrightarrow{s} E_b$ in Voisin's construction. In this way, one can obtain a rational map
\begin{equation}
   \phi_B: \bG_B\dashrightarrow X^{[d+g]}
\end{equation}
whose restriction to the fiber  $q^{-1}(E)\subseteq \bG_B$ is  $\phi_E$.

\section{Proof of Main theorem}

 Let $v=(r,D,s)$  be a primitive Mukai vector with $r\geq 0$ and assume that $H$ is $v$-generic.  We first consider the locally free sheaves in  $M_H(v)$.
\begin{theorem}\label{thm:lf}
  Let $E$ be a  locally free sheaf in $M=M_H(v)$.  Then $c_2(E)\in \rS_i(X)$ if there exists a constant cycle subvariety $Y\subseteq M$ such that $[E]\in Y$ and $\dim Y\geq d-i$. In particular, if $M$ consists of stable locally free sheaves, then $\rS_\bullet^{\rm SYZ} \CH_0(M)=\rS_\bullet\CH_0(M)$.
\end{theorem}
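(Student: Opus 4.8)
The plan is to transport the statement to the Hilbert scheme $X^{[d+g]}$ via Voisin's degeneracy-locus construction and to feed in the already-known equality $\rS_\bullet=\rS_\bullet^{\rm SYZ}$ on Hilbert schemes. First I would reduce to the case where the condition $(\star)$ of Proposition \ref{unique} holds. Replacing $E$ by $E\otimes\cO_X(mH)$ for $m\gg 0$ makes $E$ globally generated with $\rH^1(X,E)=\rH^2(X,E)=0$, and this twist induces an isomorphism $M_H(v)\xrightarrow{\sim}M_H(v\cdot e^{mH})$. It preserves Voisin's filtration (which is intrinsic to $\CH_0$), and it preserves membership in $\rS_i^{\rm SYZ}$ because $c_2$ changes only by intersection numbers of divisor classes, which lie in $\ZZ\cdot[o_X]$ by Beauville--Voisin, while $\rS_i(X)$ is invariant under translation by $\ZZ\cdot[o_X]$. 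Thus I may assume every sheaf in a dense open subset of $Y$ satisfies $(\star)$.

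Next, from the sequence \eqref{degeneracy} one computes $c_2(E)=[Z]$ in $\CH_0(X)$, where $Z=\phi_E(s)$ consists of the $d+g$ degeneracy points; in particular $\deg[Z]=d+g$ is fixed. Since $v$ is primitive and $H$ is $v$-generic, a universal sheaf $\cE$ on $M\times X$ exists, so $q_*\bigl(c_2(\cE)\cdot p^*(-)\bigr)$ gives a correspondence inducing a well-defined map $\CH_0(M)\to\CH_0(X)$, $[E]\mapsto c_2(E)$. As $Y$ is a constant cycle subvariety, all $E_b$ with $b\in Y$ therefore share a single class $c:=c_2(E)=[Z_b]\in\CH_0(X)$. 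I then apply the relative construction of the previous section over $Y$, obtaining $\phi_Y\colon\bG_Y\dashrightarrow X^{[d+g]}$ with $\bG_Y=\Gr^{r-1}(\pi_*\cE|_{Y\times X})$ of dimension $\dim Y+g$. By Proposition \ref{unique}, two points of $\bG_Y$ with the same image subscheme $Z$ come from isomorphic stable sheaves, hence from the same point $b\in Y$ and the same fibre, on which $\phi_{E_b}$ is generically injective; so $\phi_Y$ is generically injective and its image $W$ has dimension $\dim Y+g\ge (d+g)-i$.

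It remains to show that $W\subseteq X^{[d+g]}$ is a constant cycle subvariety. Along each fibre $\rG_{E_b}=\Gr(r-1,\rH^0(X,E_b))$, a rational variety with trivial $\CH_0$, all points are rationally equivalent, so (resolving $\phi_{E_b}$ to a morphism from a smooth projective rational model) the fibre maps into a single class of $\CH_0(X^{[d+g]})$. Across the base, every point of $W$ has the form $I_Z$ with $[Z]=c$ fixed; using the characterization that two subschemes are rationally equivalent in $X^{[d+g]}$ precisely when their $0$-cycles agree in $\CH_0(X)$, all of these lie in one class as well. Hence $W$ is a constant cycle subvariety through $I_Z$ of dimension $\ge(d+g)-i$, so $\dim O_{I_Z}\ge(d+g)-i$ and $I_Z\in\rS_i\CH_0(X^{[d+g]})$. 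Invoking $\rS_\bullet=\rS_\bullet^{\rm SYZ}$ for Hilbert schemes yields $[Z]\in\rS_i(X)$, i.e. $c_2(E)=c\in\rS_i(X)$, which is the first assertion. The ``in particular'' statement follows at once: the first part gives $\rS_i\CH_0(M)\subseteq\rS_i^{\rm SYZ}\CH_0(M)$, and the reverse inclusion is the result of Shen--Yin--Zhao.

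The hard part is controlling rational equivalence uniformly across the whole family $Y$ rather than fibrewise: the projection $\bG_Y\to Y$ only sees $\CH_0(Y)$, whereas I need the $\phi_Y$-images to remain in a single class of $\CH_0(X^{[d+g]})$ knowing merely that the $E_b$ are rationally equivalent in the ambient $M$. The device bridging this gap is that the image class $[I_Z]$ is governed by $[Z]=c_2(E)\in\CH_0(X)$ together with the Hilbert-scheme orbit characterization; establishing (or carefully citing) that characterization, and checking that $(\star)$ and the generic injectivity of $\phi_Y$ hold over a dense open subset of $Y$, are the points demanding the most care.
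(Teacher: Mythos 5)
Your proposal follows essentially the same route as the paper (relative Voisin construction over the constant cycle subvariety $Y$, transport to $X^{[d+g]}$, conclude via the Hilbert-scheme case), but it contains one genuine gap: the assertion that ``since $v$ is primitive and $H$ is $v$-generic, a universal sheaf $\cE$ on $M\times X$ exists.'' This is false in general. Primitivity and genericity guarantee that $M$ is a smooth projective hyper-K\"ahler variety and that universal families exist \'etale-locally, but these glue only up to a Brauer obstruction $\alpha\in\Br(M)$: what exists globally is a $1\boxtimes\alpha$-twisted universal sheaf. A genuine universal family is guaranteed only under a numerical coprimality condition on $v$ (existence of a class $w$ with $\langle v,w\rangle=1$); for instance, for $v=(2,0,-2)$ every Mukai pairing $\langle v,w\rangle$ is even, so the criterion fails even though $v$ is primitive. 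Both places where you use $\cE$ --- the correspondence $q_*\bigl(c_2(\cE)\cdot p^*(-)\bigr)$ showing that all $E_b$, $b\in Y$, share one class in $\CH_0(X)$, and the relative Grassmannian $\bG_Y=\Gr^{r-1}(\pi_*\cE|_{Y\times X})$ --- therefore do not exist as written. The paper's proof supplies exactly this missing ingredient: it passes to a Severi--Brauer variety $\nu:P\rightarrow M$ for $\alpha$, over which an honest universal sheaf exists, runs the relative construction over $B=\nu^{-1}(Y)$, and observes that any two points of a fibre $\nu^{-1}(E')$ correspond to the same sheaf, hence have the same image in $X^{[d+g]}$, so the dimension count $\dim \phi(p^{-1}(Y))=\dim Y+g$ still goes through. (For the step ``rationally equivalent points of $M$ have rationally equivalent $c_2$'' the paper simply quotes the Marian--Zhao theorem \cite{MZ20}, whose other direction, applied to the Hilbert scheme, is also the ``characterization'' you invoke.)

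Apart from this, your argument matches the paper's: the twist by $mH$ to achieve condition $(\star)$ (the paper invokes Langer's boundedness to get it uniformly on all of $M$, while your openness-of-$(\star)$ argument over $Y$ is an acceptable variant, and your observation that the twist only moves $c_2$ by divisor intersections, hence by multiples of $o_X$, is correct); generic injectivity via Proposition \ref{unique}; rationality of the Grassmannian fibres to see that each fibre maps to a constant cycle subvariety; constancy of the image in $\CH_0(X^{[d+g]})$; and the Hilbert-scheme input (\cite[Theorem 2.1]{Voi15}, equivalently $\rS_\bullet=\rS_\bullet^{\rm SYZ}$ on $X^{[d+g]}$) to conclude $[Z]\in\rS_i(X)$. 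Your slight imprecision $c_2(E)=[Z]$ (the two differ by intersections of divisor classes coming from \eqref{degeneracy}) is harmless, since by Beauville--Voisin those terms lie in $\ZZ\cdot[o_X]$ and $\rS_i(X)$ is a union of $\ZZ\cdot[o_X]$-cosets. So the proposal is salvageable, but only after replacing the nonexistent universal sheaf by the twisted one and carrying the Severi--Brauer fibration through the construction as the paper does.
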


\begin{proof}
After tensoring line bundles $mH$ for sufficiently large $m$ without changing $M$, we may assume  that the sheaves in $M$ satisfy the condition $(\star)$ by \cite[Theorem 0.1, Section 10.1]{langer2006moduli}.   There exists an $1\boxtimes \alpha$-twisted universal sheaf on $X\times M$. We take a Severi-Brauer variety $$\nu:P\rightarrow M$$ for the Brauer class $\alpha$. Then there is a universal sheaf $\cE$ on $X\times P$ in the sense that for $x\in P$, the stalk $\cE_x$ of $\cE$ is isomorphic to $E_{\nu (x)}$, where $E_{\nu (x)}$ is the sheaf corresponding to $\nu (x)\in M$.

With the same notations in the previous section,   for any subvariety $Y\subseteq M$ containing $E$, take  $B=\nu^{-1}(Y)$,  we  can apply relative version of Voisin's construction to obtain a rational map 
$$\phi_{\nu^{-1}(Y)}:\bG_{\nu^{-1}(Y)} \dashrightarrow X^{[d+g]}$$
By Proposition \ref{unique}, we have 
\begin{enumerate}
    \item the restriction of $\phi_{\nu^{-1}(Y)}$ to 
the fiber of $p: \bG_{\nu^{-1}(Y)}\rightarrow \nu^{-1}(Y)$ is generic injective and hence $\phi_{\nu^{-1}(Y)}$ sends each fiber  to a $g$-dimensional constant cycle subvariety in $X^{[d+g]}$.
\item for different $E'\in Y$ and the images $\phi(p^{-1}(E'))$ in $X^{[d+g]}$ are disjoint if $E'$ is locally free.
\end{enumerate}   Moreover, for any two points  $s_1,s_2\in \nu^{-1}(E')$, their corresponding sheaves are the same and so are their images in $X^{[d+g]}$. Then we have 
$$\mathrm{dim}~ \phi(p^{-1}(Y))=\dim Y+g.$$

Now, suppose $Y$ is  a constant cycle subvariety on $M$ of dimension $\geq d-i$.  According to the main Theorem in \cite{MZ20}, two sheaves  $E', E''\in M$ are rationally equivalent in $\CH_0(M)$ if and only if $c_2(E')$ and $c_2(E'')$ are rationally equivalent in $\CH_0(X)$.  For any point in $\bG_{\nu^{-1}(Y)}$,  its image $I_Z\otimes D$ represent the same class in $\CH_0(X^{[d+g]})$ as $$c_2(I_Z\otimes D)=c_2(E) $$ in $\CH_0(X)$ because the points in $Y$ are constant in $\CH_0(M)$. This implies that the Zariski closure of the image $\phi(p^{-1}(Y))$ is a constant cycle subvariety on $X^{[d+g]}$. By \cite[Theorem 2.1]{Voi15},  we have 
$$c_2(I_Z\otimes D)\in \rS_i(X)$$
 and hence $E\in \rS_i\CH_0(M)$. For the last statement, we have  $$\rS_i\CH_0(M) \subseteq \rS_i^{\rm SYZ} \CH_0(M)$$ from the definition of $\rS_i\CH_0(M)$. The assertion then  follows from the inclusion $ \rS_i^{\rm SYZ} \CH_0(M) \subseteq \rS_i\CH_0(M) $.
\end{proof}

\subsection*{Proof of Theorem \ref{mainthm}} We first deal with the case $M=M_H(v)$ with $r\geq 0$. By tensoring the sheaf in $M$ with $mH$ for $m$ sufficiently large, we may always assume that 

\begin{equation}\label{eq:larged}
    D\cdot H>\max\{ 4r^2+1, 2r(v^2 H^2-D^2 H^2+(D\cdot H)^2)\},  ~s>2d,  \hbox{~if $r>0$}
\end{equation}
or \begin{equation}\label{eq:larges}
    s>\max\{2d, \frac{(D\cdot H)^2}{2H^2}+1\}, \hbox{~if $r=0$}.
\end{equation}
Let $\mu_H$ be the slope function with respect to $H$. Under these conditions,  Yoshioka has shown in \cite[Theorem 1.7, Corollary 2.14]{Yoshi09}  that   there is an isomorphism 
$$\Phi: M_H(v)\to M_{\widehat{H}}(\widehat{v})$$
induced by a Fourier-Mukai transform such that for any stable sheaf $E\in M$, $\Phi(E)$ is a $\mu_{\widehat{H}}$-stable sheaf of rank $s$ (See also \cite[Theorem 3.4]{MR3942159}). The conditions also ensures that the $\mu_{\widehat{H}}$-stable sheaves in $M_{\widehat{H}}(\widehat{v})$  have sufficiently large rank and have to be locally free (cf.~\cite[Remark 3.2]{Yo01} and \cite[Remark 6.1.9]{Huy}). 
In fact, there is an exact sequence
$$0\rightarrow E\rightarrow E^{\vee\vee}\rightarrow Q\rightarrow 0$$
with $E^{\vee\vee}$ being $\mu_{\widehat{H}}$-stable  and $Q$ being a $0$-dimensional sheaf whose support is of length $l$. Then $v(E^{\vee\vee})^2=v(E)^2-2rl\geq -2$. Therefore $l=0$ if $r>\frac{v(E)^2+2}{2}$.
By Theorem \ref{thm:lf}, we have  $$\rS_\bullet^{\rm SYZ}(M_{\widehat{H}}(\widehat{v}))=\rS_{\bullet}(M_{\widehat{H}}(\widehat{v}))$$
Finally, according to \cite[Proposition 0.4]{SYZ20}, $\Phi$ induces an isomorphism between the two filtartions $$\rS_\bullet^{\rm SYZ}\CH_0(M_{H}(v))\cong \rS_{\bullet}^{\rm SYZ}\CH_0(M_{\widehat{H}}(\widehat{v})).$$
It follows that  $\rS_\bullet^{\rm SYZ}\CH_0(M)=\rS_{\bullet}\CH_0(M)$.

Now we return to the case $M=M_\sigma(v)$ is a Bridgeland moduli space.  According to \cite[Proposition 3.5]{MR3942159}, every Bridgeland moduli space  is isomorphic to a moduli space of Gieseker stable sheaves via a derived equivalence.  Then again by \cite[Proposition 0.4]{SYZ20}, one can immediately get  the assertion.

\begin{corollary}\label{cor:dense}
For any $E\in M$ with dim $O_E= d-i$, the subvarieties of the orbit $O_E$ with dimension $d-i$ are dense in $M$.
\end{corollary}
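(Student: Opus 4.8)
The plan is to reverse the construction used in Theorem \ref{thm:lf}, transferring the density of constant cycle subvarieties from the Hilbert scheme back to $M$. First I would reduce to the locally free situation: the Fourier--Mukai map $\Phi:M_H(v)\xrightarrow{\sim}M_{\widehat H}(\widehat v)$ of Yoshioka (and a derived equivalence in the Bridgeland case) is an isomorphism of varieties, hence carries rational equivalence of points, orbit dimensions, and constant cycle subvarieties to their counterparts; so it suffices to treat $M=M_H(v)$ consisting of locally free sheaves satisfying $(\star)$, where the rational map $\phi:\bG_M\dashrightarrow X^{[d+g]}$ of Section 2 is defined.

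Next I would match the orbits under $\phi$. By \cite{MZ20}, $E'\sim E''$ in $\CH_0(M)$ if and only if $c_2(E')\sim c_2(E'')$ in $\CH_0(X)$, and for the associated points one has $c_2(I_{Z'}\otimes D)=c_2(E')$; therefore $\phi$ sends the orbit $O_E\subseteq M$ into the orbit $O_w\subseteq X^{[d+g]}$ of a point $w\in\phi_E(\rG_E)$. Combined with the count $\dim\phi(p^{-1}(Y))=\dim Y+g$ from the proof of Theorem \ref{thm:lf} and the hypothesis $\dim O_E=d-i$, this shows the two orbits share the level $i$, i.e. $\dim O_w=(d+g)-i$. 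Since $X^{[d+g]}$ is a Hilbert scheme of points on a K3 surface, the assertion is already known there: by Voisin's density of constant cycle subvarieties \cite{Voi15}, the maximal (that is, $(d-i+g)$-dimensional) constant cycle subvarieties contained in $O_w$ are dense in $X^{[d+g]}$.

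Then I would push these back. For such a maximal constant cycle subvariety $W\subseteq O_w$, the intersection $W\cap\mathrm{Im}\,\phi$ is again a constant cycle subvariety lying in $O_w$ and inside the image of $\phi$; pulling it back along the generically injective $\phi$ and projecting by $p$ to $M$ yields a constant cycle subvariety contained in $O_E$, of dimension $d-i$ (the upper bound being forced by $\dim O_E=d-i$ and the lower bound by the expected-dimension count). As $W$ ranges over the dense family on $X^{[d+g]}$, the pulled-back subvarieties $\phi^{-1}(W\cap\mathrm{Im}\,\phi)$ should sweep out $\bG_M$ densely; since $p:\bG_M\to M$ is surjective, their images are dense in $M$, as required.

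I expect the main obstacle to lie in this last transfer, because $\mathrm{Im}\,\phi$ has codimension $g$ in $X^{[d+g]}$. One must show that a general member $W$ of Voisin's dense family meets $\mathrm{Im}\,\phi$ in the expected dimension $d-i$ and transversally to the $g$-dimensional Grassmannian fibers of $p:\bG_M\to M$, so that $p$ does not drop dimension, and that density genuinely survives both the intersection with the fixed subvariety $\mathrm{Im}\,\phi$ and the projection. A secondary point to pin down is that Voisin's density holds within the single fixed orbit $O_w$, rather than merely among constant cycle subvarieties of the prescribed dimension; this is precisely the Hilbert-scheme instance of the corollary and is where \cite{Voi15} is invoked.
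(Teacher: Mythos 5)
Your approach has a genuine gap, and it sits exactly where you yourself flag it: the transfer of density through $\mathrm{Im}\,\phi$. Voisin's density statement gives a family of constant cycle subvarieties $W_\alpha\subseteq O_w$ of dimension $d-i+g$ whose \emph{union} is dense in $X^{[d+g]}$; this does not imply that $\bigcup_\alpha\bigl(W_\alpha\cap \mathrm{Im}\,\phi\bigr)$ is dense in the codimension-$g$ subvariety $\overline{\mathrm{Im}\,\phi}$, nor even that these intersections are non-empty or of the expected dimension $d-i$. The $W_\alpha$ are rigid, special subvarieties — there is no Bertini-type moving argument available to force them into general position with respect to the fixed locus $\mathrm{Im}\,\phi$. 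Even when an intersection $W_\alpha\cap\mathrm{Im}\,\phi$ does have dimension $d-i$, it may run partly along the $g$-dimensional Grassmannian fibers of $p:\bG_M\to M$ (each of which is itself a constant cycle subvariety collapsing to a point of $M$), so the image under $p$ can drop dimension below $d-i$. None of these issues is addressed in the proposal, so the final step does not go through as written.

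The paper avoids this entirely by using a different correspondence: the incidence variety $R\subset M\times X^{[d]}$ of Shen--Yin--Zhao, consisting of pairs $(E,\xi)$ with $c_2(E)=[\mathrm{supp}(\xi)]+c\cdot o_X$ in $\CH_0(X)$, whose two projections $p:R\to M$ and $q:R\to X^{[d]}$ are \emph{both} generically finite and surjective. One first gets $c_2(E)\in\rS_i(X)$ from Theorem \ref{mainthm}, picks $\xi$ with $(E,\xi)\in R$, and applies Voisin's density result (\cite[Lemma 3.5]{Voi15}) in $X^{[d]}$ to obtain a dense family $W_\alpha\subseteq O_\xi$ of dimension $d-i$. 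Because $q$ is (after shrinking $R$) finite and surjective, the preimages $q^{-1}(W_\alpha)$ are dense in $R$ of the same dimension, and pushing forward by the finite surjective $p$ preserves both density and dimension; by \cite{MZ20} the images land in $O_E$. In other words, replacing your codimension-$g$ correspondence $\bG_M\dashrightarrow X^{[d+g]}$ by a correspondence that is dominant and generically finite over both factors is precisely what makes the density transfer trivial, and is the missing ingredient your argument would need to substitute or reprove.
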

\begin{proof}
In \cite{SYZ20}, it has been shown that there is an incident variety 
$$R\subset \{(E,\xi)~|~c_2(E)=[\mathrm{supp}(\xi)]+c\cdot o_X\in \CH_0(X) \}\subset M\times S^{[d]}$$
such that the natural projections in the diagram below
\begin{equation*}
   \begin{tikzcd}
    & R \arrow[ld, "p"'] \arrow[rd, "q"] &           \\
    M &                                    & {X^{[d]}}
   \end{tikzcd} 
\end{equation*}
are both generically finite and surjective.  Here,  $c$ is a constant depending on the Mukai vector of $E$.
 By Theorem  \ref{mainthm}, we have  $c_2(E)\in \rS_i(X)$. Using the surjectivity of $p$, we can find $\xi\in X^{[d]}$ such that $(E,\xi)\in R$. As $c_2(\xi)\in \rS_i(X)$,   by \cite[Lemma 3.5]{Voi15}, the subvarieties $W_\alpha \subseteq O_{\xi}$ with dimension $d-i$ are dense in $X^{[d]}$. Shrinking  $R$ if necessary, we may assume  $p$ and $q$ are finite. Then the subvarieties  $p\circ q^{-1}(W_\alpha)$ are lying  in the orbit  $O_E$ of dimension $d-i$ and they are clearly Zariski dense in $M$. 
\end{proof}

\begin{corollary}
The filtration $S_\bullet\CH_0(M)$ is invariant under birational transform between hyper-K\"ahler varieties, i.e. if $f:M=M_\sigma(v)\dashrightarrow M'$ is a birational map, then the induced isomorphism $$\CH_0(M')\cong \CH_0(M)$$ 
preserves the filration $S_\bullet$. 
\end{corollary}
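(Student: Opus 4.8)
The plan is to reduce the statement to the identity $\rS_\bullet\CH_0=\rR_\bullet\CH_0$ established in Theorem \ref{mainthm2}, and then to exploit that Vial's co-radical filtration is manifestly a birational invariant. First I would observe that, by the minimal model program for moduli spaces of Bridgeland stable objects of Bayer--Macr\`i \cite{BM14}, every smooth projective hyper-K\"ahler manifold $M'$ birational to $M=M_\sigma(v)$ is again a moduli space of $\sigma'$-stable objects with the \emph{same} primitive Mukai vector $v$, for some $v$-generic stability condition $\sigma'$; the birational map $f$ is the associated wall-crossing map. In particular the hypotheses of Theorems \ref{mainthm} and \ref{mainthm2} hold verbatim for $M'$, so that
\begin{equation*}
\rS_\bullet\CH_0(M)=\rR_\bullet\CH_0(M),\qquad \rS_\bullet\CH_0(M')=\rR_\bullet\CH_0(M').
\end{equation*}

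Next I would recall that $\CH_0$ of a smooth projective variety is a birational invariant: by weak factorization, any birational map between $M$ and $M'$ is a composition of blow-ups and blow-downs along smooth centres, and the blow-up formula shows that $\CH_0$ is unchanged at each step, since the correction terms are Chow groups of negative dimension on the centre and hence vanish. This produces the asserted isomorphism $\CH_0(M')\cong\CH_0(M)$, realised concretely by the correspondence given by the closure $\Gamma\subset M\times M'$ of the graph of $f$, which restricts to $[x]\mapsto[f(x)]$ on the open locus where $f$ is an isomorphism. The crucial point is that Vial's co-radical filtration $\rR_\bullet\CH_0$ is defined purely in terms of the co-algebra structure on the birational motive; since birational smooth projective varieties have canonically isomorphic birational motives, the filtration $\rR_\bullet$ is carried isomorphically onto $\rR_\bullet$ by the induced map on zero-cycles.

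Combining the two displays with the birational invariance of $\rR_\bullet$, the isomorphism $\CH_0(M')\cong\CH_0(M)$ identifies $\rS_\bullet\CH_0(M)=\rR_\bullet\CH_0(M)$ with $\rR_\bullet\CH_0(M')=\rS_\bullet\CH_0(M')$, which is exactly the assertion. I expect the main obstacle to be the first and third steps rather than the elementary second one: one must invoke Bayer--Macr\`i to know that $M'$ is itself of the form $M_{\sigma'}(v)$ so that Theorem \ref{mainthm2} is available on both sides, and one must check that the co-radical filtration is compatible with the \emph{specific} isomorphism $\CH_0(M')\cong\CH_0(M)$ appearing in the statement, i.e.\ that Vial's identification of birational motives induces the graph-correspondence isomorphism on zero-cycles. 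As an alternative not relying on $\rR_\bullet$, one could argue directly with the SYZ description: on the common open locus the two moduli spaces carry the same universal object, so $c_2$ is preserved under $f$, and hence so is the generating condition $c_2(E)\in\rS_i(X)$ for $\rS^{\rm SYZ}_\bullet=\rS_\bullet$; the only delicate point there is to control the classes of sheaves supported on the indeterminacy locus of $f$, which can be handled using that $\rS_i$ is generated by constant cycle subvarieties of dimension $\geq d-i$ and that $f$, preserving rational equivalence, carries such subvarieties to subvarieties of the same dimension.
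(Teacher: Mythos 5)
Your proposal is correct, and it shares the paper's first step --- invoking Bayer--Macr\`i \cite{BM14} to realize any hyper-K\"ahler birational model $M'$ of $M_\sigma(v)$ as a moduli space $M_{\sigma'}(v)$ for a $v$-generic $\sigma'$ --- but it diverges at the key invariance argument. The paper uses the finer output of \cite{BM14}, namely that up to an automorphism the birational map $f$ is induced by a derived (anti-)autoequivalence, and then concludes from Theorem \ref{mainthm} ($\rS_\bullet=\rS^{\rm SYZ}_\bullet$) together with the fact that $\rS^{\rm SYZ}_\bullet$ is independent of the modular interpretation (\cite[Proposition 0.4]{SYZ20}). You instead route through Vial's co-radical filtration: $\rS_\bullet=\rR_\bullet$ on both sides by Theorem \ref{mainthm2}, and $\rR_\bullet$ is birationally invariant because it is defined on the birational motive, whose coalgebra structure is preserved by the isomorphism attached to any birational map. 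Your route buys a genuine simplification: it needs only the coarse statement that $M'$ admits a moduli interpretation (so that Theorem \ref{mainthm2} applies on both sides), and never uses the description of $f$ itself as a wall-crossing or derived map --- which is just as well, since your parenthetical claim that $f$ \emph{is} the wall-crossing map is slightly overstated ($f$ may differ from it by an automorphism). The price is reliance on the machinery of \cite{Vi20} and on the compatibility you rightly flag, namely that Vial's identification of birational motives induces on zero-cycles exactly the graph-correspondence isomorphism; this does hold, since morphisms of birational motives $h^{\circ}(M)\to h^{\circ}(M')$ are by construction zero-cycles on $M'$ over $k(M)$ and the isomorphism attached to $f$ is the class of its graph, but it deserves the explicit check you indicate. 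Two minor points: weak factorization is a heavy hammer for the birational invariance of $\CH_0$ (the classical correspondence argument suffices), and your alternative direct-SYZ sketch at the end is the only genuinely incomplete piece of the write-up (the cycles meeting the indeterminacy locus are not controlled there), but your main argument does not depend on it.
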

\begin{proof}
The hyper-K\"ahler birational models of $M$ is controlled by its birational ample cone. 
According to \cite[Theorem 1.2]{BM14}, there is a continuous map
$$\ell: {\rm Stab}^\dag(X)\rightarrow {\NS( M_\sigma(v))},$$
from a connected component of space of stability conditions on $\rD^b(X)$ to the N\'eron-Severi group such that the image of $\ell$ is the birational ample cone and  $M'$ is isomorphic to a moduli space of stable objects $M_{\sigma'}(v)$ for a generic $\sigma'$ in some chamber $\cC'$, such that $\ell(\cC')=f^*{\rm Amp}(M_{\sigma'}(v))$ and $\ell(\cC)={\rm Amp}(M_{\sigma}(v))$ for the chamber $\cC$ containing $\sigma$. Moreover, up to an automorphism, the birational map $f:M_\sigma(v) \dashrightarrow M_{\sigma'(v)}$ is actually induced by a derived (anti) autoequivalence. 

Thus we can deduce the result by the $\rS_\bullet=\rS^{\rm SYZ}_\bullet$ and that $\rS^{\rm SYZ}_{\bullet}$ is independent of  modular interpretations.

\end{proof}

\section{Applications: Comparison with other filtrations}

With the notations as before, on the moduli space $M=M_H(v)$ of stable sheaves, there is a distinguished point $c_M=[E_0]\in M$ such that $c_2(E_0)=ko_X$ with $k=\frac{D^2}{2}+r-s$. The existence of  $c_M$ is ensured by  the surjectivity of the projection map from the incident variety $R$ to $X^{[d]}$. Let us recall the filtrations defined in \cite{BFMS19}. 

\begin{definition}Let $\Delta$ be the diagonal in the product $M\times M$ and we set 
\begin{equation}
\overline{\Delta}=\Delta-M\times c_M \in \CH_{2d}(M\times M)
\end{equation}
Then one can define
\begin{equation}   
\rS^{\rm BFMS}_i(\CH_0(M)):= \{\alpha ~| ~\alpha \cdot \overline{\Delta}_{0,1} \ldots \cdot \overline{\Delta}_{0,i+1} = 0 \in  \CH_\ast (M \times M^{i+1})\},
\end{equation}
where $\overline{\Delta}_{0,j}$ is the pull back of $\overline{\Delta}$ from the $0$-th and $j$-th factors for $j=0,1,\cdots i+1$. It is useful to single out the subgroup 
$$\rS^{\mathrm{small}}_i(\CH_0(M))\subset \rS^{\rm BFMS}_i(\CH_0(M))$$
 generated by point classes $[F]\in \rS_i^{\rm BFMS}(\CH_0(M))$ with  $F\in M$.
\end{definition}

In \cite{BFMS19} and \cite{Vi20}, it has been proved that 
\begin{equation}\label{same}
       \rS^{\mathrm{small}}_{\bullet}\CH_0(M)=\rS^{\rm BFMS}_\bullet \CH_0(M)=\rS_\bullet^{\rm SYZ} \CH_0(M)=\rR_\bullet \CH_0(M),
\end{equation}
(cf.~\cite[Proposition A.6, Theorem 8.4]{Vi20}). Furthermore, for 0-cycles, the termination of this filtration implies that for all $F\in M$
$$[F]\cdot  \overline{\Delta}_{0,1} \cdot  \overline{\Delta}_{0,2}\ldots \overline{\Delta}_{0,d+1}= 0 \in \CH_*(M \times M^{d+1}).$$ 
Furthermore, they conjectured that a generalization of this equation for higher dimensional constant cycle varieties is still valid. As a consequence of Theorem \ref{mainthm}, we confirm their conjecture. 
\begin{theorem}\cite[Conjecture 4]{BFMS19}
Let $Y\subseteq M$ be a constant cycle subvariety. Then
 $$[Y]\cdot  \overline{\Delta}_{0,1} \cdot  \overline{\Delta}_{0,2}\ldots \overline{\Delta}_{0,d+1}= 0 \in \CH_*(M \times M^{d+1}).$$
Here, $[Y]$ is pulled back  as usual from the first $M$ factor.
\end{theorem}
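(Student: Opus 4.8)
The plan is to read the product $\overline{\Delta}_{0,1}\cdots\overline{\Delta}_{0,d+1}$ as a relative zero-cycle over the $0$-th factor and to reduce the statement to the termination identity already recorded for point classes. Write $\Gamma=\overline{\Delta}_{0,1}\cdots\overline{\Delta}_{0,d+1}\in\CH_{2d}(M\times M^{d+1})$. Restricting $\Gamma$ to the slice $\{F\}\times M^{d+1}$ over a closed point $F\in M$ and using $\overline{\Delta}|_{\{F\}\times M}=[F]-[c_M]$ gives the external power $([F]-[c_M])^{\boxtimes(d+1)}\in\CH_0(M^{d+1})$; the termination statement $[F]\cdot\overline{\Delta}_{0,1}\cdots\overline{\Delta}_{0,d+1}=0$ is thus exactly the vanishing of these external powers for every closed $F$. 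Denoting by $\iota_Y\colon Y\hookrightarrow M$ the inclusion and writing $k=\dim Y$, the class $[Y]$ is pulled back from the $0$-th factor, so $[Y]\cdot\Gamma$ is the pushforward along $Y\times M^{d+1}\hookrightarrow M\times M^{d+1}$ of the refined restriction of $\Gamma$ to $Y\times M^{d+1}$ (pulled back along $\iota_Y\times\id$, after resolving $Y$ if necessary). Hence it suffices to prove that this refined restriction $(\iota_Y\times\id)^!\Gamma$ vanishes in $\CH_k(Y\times M^{d+1})$.

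Let $y_0\in Y$ be a point and set $\delta_0=[y_0]-[c_M]\in\CH_0(M)$; since $Y$ is a constant cycle subvariety, $[y]=[y_0]$ for every closed $y\in Y$. The heart of the argument is the \emph{family-level diagonal relation}
\[
[\Delta_Y]=[Y\times y_0]\quad\text{in }\CH_k(Y\times M),
\]
where $\Delta_Y=\{(y,y):y\in Y\}$; equivalently $(\iota_Y\times\id)^!\overline{\Delta}=[Y]\boxtimes\delta_0$ in $\CH_k(Y\times M)$. Granting this, I would peel off the factors one at a time: intersecting with $\overline{\Delta}_{0,1}$ replaces the first $M^{d+1}$-factor by $\delta_0$ while the $0$-th factor still carries $[Y]$, so the same relation applies to $\overline{\Delta}_{0,2}$, and after $d+1$ steps one obtains $(\iota_Y\times\id)^!\Gamma=[Y]\boxtimes\delta_0^{\boxtimes(d+1)}$. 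Finally $\delta_0^{\boxtimes(d+1)}=([y_0]-[c_M])^{\boxtimes(d+1)}=0$ by the termination identity applied to the single point $y_0$, which lies in $\rS_d\CH_0(M)=\CH_0(M)$; therefore $(\iota_Y\times\id)^!\Gamma=0$, as wanted.

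It remains to establish the family-level diagonal relation, and this is where the main difficulty lies. The class $\Theta=[\Delta_Y]-[Y\times y_0]$ restricts to $[y]-[y_0]=0$ on every closed fiber $\{y\}\times M$, so the only issue is upgrading this \emph{fibrewise} triviality to triviality as a family — the usual gap between a family of rationally trivial zero-cycles and a rationally trivial family. I would close this gap by a Bloch--Srinivas spreading-out together with induction on $\dim Y$: if the restriction $\Theta_\eta$ to the generic point of $Y$ vanishes in $\CH_0(M_{k(Y)})$, then $\Theta$ is supported, up to rational equivalence, over a proper closed subset $Y'\subsetneq Y$, whose components are again constant cycle subvarieties of strictly smaller dimension, and the induction closes, the case $\dim Y=0$ being trivial. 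The one genuinely new input required is the generic-point equivalence $[\eta_Y]=[y_0]$ in $\CH_0(M_{k(Y)})$, i.e. the constant-cycle property in its strong form.

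This last point is supplied by the structure theory assembled earlier. By Theorem \ref{mainthm} we have $\rS_\bullet=\rS_\bullet^{\rm SYZ}$, and through the incidence correspondence $R\subset M\times X^{[d]}$ of Corollary \ref{cor:dense} — whose two projections are generically finite and surjective — a constant cycle subvariety $Y\subseteq M$ is matched with one of the explicit subvarieties $p\circ q^{-1}(W_\alpha)$ built from Voisin's families $W_\alpha\subseteq X^{[d]}$ in \cite{Voi15}. These carry honest algebraic families of rational equivalences, so that the generic-point equivalence holds for them, and I would argue that it descends along the generically finite correspondence $R$. I expect verifying this descent of the generic-point property to be the most delicate step. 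Alternatively, one may avoid the explicit geometry altogether and deduce the statement formally from the coincidence of all five filtrations in \eqref{same} together with Theorem \ref{mainthm}, within the framework of \cite{BFMS19,Vi20}.
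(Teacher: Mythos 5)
Your argument hinges on the ``family-level diagonal relation'' $[\Delta_Y]=[Y\times y_0]$ in $\CH_k(Y\times M)$, and this relation is \emph{false} for every constant cycle subvariety of positive dimension. Push forward along the second projection $\mathrm{pr}_2\colon Y\times M\to M$: since $\Delta_Y\to M$ is a closed immersion with image $Y$, one gets $\mathrm{pr}_{2*}[\Delta_Y]=[Y]$, whereas $Y\times\{y_0\}$ maps onto the single point $y_0$, so $\mathrm{pr}_{2*}[Y\times y_0]=0$ once $k=\dim Y\geq 1$. The relation would therefore force $[Y]=0$ in $\CH_k(M)$, which is absurd because $[Y]\cdot H^{k}$ has positive degree for an ample class $H$. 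What the constant-cycle hypothesis actually yields, via the spreading argument you invoke, is only that $\Theta=[\Delta_Y]-[Y\times y_0]$ dies after restriction to $U\times M$ for some dense open $U\subseteq Y$, i.e.\ $\Theta$ is the pushforward of some $k$-cycle $\Theta'$ on $Y'\times M$ with $Y'=Y\setminus U$ closed and proper. Your induction does not close at this point: although every component of $Y'$ is again a (pointwise) constant cycle subvariety, $\Theta'$ is an arbitrary $k$-dimensional cycle on $Y'\times M$, not one of the form $[\Delta_{Y'}]-[Y'\times y_0]$, so the inductive hypothesis says nothing about it; in fact $\Theta'$ pushes forward to $[Y]\neq 0$ in $\CH_k(M)$, so it can never be made to vanish. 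Organizing an induction whose statement is robust enough to absorb precisely these error terms when they are multiplied by the remaining factors $\overline{\Delta}_{0,j}$ is the genuine content of \cite[Lemmas 3 and 4]{BFMS19}, which you would need but do not supply.

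Your closing sentence --- deduce the statement ``formally'' from the coincidence of the filtrations --- is in substance the paper's actual proof, but it is not formal without the missing ingredient: \cite[Lemma 4]{BFMS19} asserts that the conjecture is \emph{equivalent} to the inclusion $\rS_\bullet\CH_0(M)\subseteq\rS^{\rm BFMS}_\bullet\CH_0(M)$, and the paper simply combines this equivalence with Theorem \ref{mainthm} and \eqref{same}. Passing from information about zero-cycles (equality of filtrations on point classes) to the vanishing of the positive-dimensional classes $[Y]\cdot\overline{\Delta}_{0,1}\cdots\overline{\Delta}_{0,d+1}$ is exactly the nontrivial step that lemma encapsulates, and it is the step on which your direct attempt founders.
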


\begin{proof}
According to \cite[Lemma 4]{BFMS19}, Conjecture holds if and only if $$\rS_\bullet\CH_0(M)\subseteq \rS^{\rm BFMS}_\bullet\CH_0(M).$$
The assertion follows directly from \eqref{same}
(cf.\cite[Lemma 3]{BFMS19} and \cite[Theorem 8.4]{Vi20}).
\end{proof}

Combining the above results, one can conclude Theorem \ref{mainthm2}.

\begin{remark}
It was asked in \cite[Question 3.2]{SYZ20}  whether one has the following equivalence $$[E]\in \rS_i^{\rm SYZ}\CH_0(M) ~{\rm if~and~only~if}~ c_2(E)\in\rS_i(X).$$
An affirmative answer of this question can be implied by the following statement: 
\begin{equation}\label{eq:nonvan}
[V_E]\cdot  \overline{\Delta}_{0,1} \cdot  \overline{\Delta}_{0,2}\ldots \overline{\Delta}_{0,d}\neq 0\end{equation}
for a maximal constant cycle variety $V_E$ containing $E$.   This is due to a direct computation in \cite[Remark 6]{BFMS19}: if $E\in \rS_i^{\rm small}\CH_0(X)$, one must have dim $V_E\geq d-i$ assuming \eqref{eq:nonvan}.  Then up to taking a derived equivalence  preserving O'Grady's filtration, one can assume that $E$ is locally free and obtain $c_2(E)\in \rS_i(X)$ by applying Theorem \ref{thm:lf}.

In \cite[Remark 6]{BFMS19}, it also claims that \eqref{eq:nonvan} holds if $$\rS_\bullet^{\mathrm{small}}\CH_0(M)\subseteq \rS_\bullet\CH_0(M).$$which is always true by Theorem \ref{mainthm2}. However, it seems to us that there exists a gap in the proof:  it's unclear why there  exists a constant cycle variety $V_E$  containing $E$ with dim $V_E\geq n-i$ for $E\in \rS_i^{\rm small}\CH_0(M) \cap \rS_i\CH_0(M)\setminus \rS_{i-1}^{\rm small}\CH_0(M)$. Hence the question seems still open. 
\end{remark}

\bibliographystyle{alpha}
\bibliography{main}

\end{document}